\begin{document}

\newcounter{lemma}
\newcommand{\lemma}{\par \refstepcounter{lemma}%
{\bf Лемма \arabic{lemma}.}}

\newcounter{corollary}
\newcommand{\corollary}{\par \refstepcounter{corollary}%
{\bf Следствие \arabic{corollary}.}}

\newcounter{remark}
\newcommand{\remark}{\par \refstepcounter{remark}%
{\bf Замечание \arabic{remark}.}}

\newcounter{theorem}
\newcommand{\theorem}{\par \refstepcounter{theorem}%
{\bf Теорема \arabic{theorem}.}}

\newcounter{proposition}
\newcommand{\proposition}{\par \refstepcounter{proposition}%
{\bf Предложение \arabic{proposition}.}}

\renewcommand{\refname}{\centerline{\bf Список литературы}}

\newcommand{\proof}{{\it Доказательство.\,\,}}

\noindent УДК 517.5

{\bf Д.П.~Ильютко} (МГУ имени М.\,В.\,Ломоносова),

{\bf Е.А.~Севостьянов} (Житомирский государственный университет им.\
И.~Франко)

\medskip
{\bf Д.П.~Ільютко} (МДУ імені М.\,В.\,Ломоносова),

{\bf Є.О.~Севостьянов} (Житомирський державний університет ім.\
І.~Франко)

\medskip
{\bf D.P.~Ilyutko} (M.\,V.\,Lomonosov Moscow State University),

{\bf E.A.~Sevost'yanov} (Zhitomir State University of I.~Franko)

{\bf О локальных свойствах одного класса отображений на римановых
многообразиях}

{\bf Про локальні властивості одного класу відображень на ріманових
многовидах}

{\bf On local properties of one class of mappings on Riemannian
manifolds}

Настоящая работа посвящена изучению вопросов, находящихся на стыке
теории пространственных квазиконформных отображений и теории
римановых поверхностей. Получены теоремы о локальном поведении
одного класса открытых дискретных отображений с неограниченной
характеристикой квазиконформности между произвольными римановыми
многообразиями.

Дана робота присвячена вивченню питань, що знаходяться на стику
теорії просторових квазіконформних відображень та теорії ріманових
поверхонь. Отримано теореми про локальну поведінку одного класу
відкритих дискретних відображень з необмеженою характеристикою
квазіконформності на довільних ріманових многовидах.

The present paper is devoted to questions located at the junction of
the theory of space quasiconformal mappings and Riemannian surfaces.
Theorems on local behavior of one class of open discrete mappings
with unbounded characteristic of qua\-si\-con\-for\-ma\-li\-ty on
arbitrary Riemannian manifolds are obtained.

\newpage
{\bf 1. Введение.} Настоящая заметка посвящена изучению локальных
свойств одного класса отображений на римановых многообразиях,
которые предполагаются ниже только открытыми и дискретными
(инъективность отображений не обязательна). Исследования,
проведённые ниже, относятся, с одной стороны, к исследованию
отображений геометрическим методом (методом модулей), с другой
стороны -- к теории многообразий. Как известно, в последнее время
активно развивается теория отображений на римановых многообразиях и
общих метрических пространствах, в частности --- теория
квазиконформных отображений и отображений с конечным искажением,
см., напр., \cite{ARS}--\cite{Af$_1$}. Основы теории квазиконформных
отображений, как известно, заложены академиком М.\,А.~Лаврентьевым,
что в дальнейшем дало почву для зарождения теории отображений с
ограниченным искажением по Ю.\,Г.~Решетняку и теории отображений с
конечным искажением (см.~\cite{BGMV}, \cite{Cr$_2$}, \cite{IM} и
\cite{Re}--\cite{Va}).

\medskip
Среди указанных работ отдельно можно указать публикации, в которых
за основу определения изучаемого класса берутся верхние и нижние
оценки искажения конформного модуля семейств кривых (\cite{ARS},
\cite{Cr$_2$}, \cite{KRSS}--\cite{MRSY}, \cite{Af$_1$} и
\cite{Sev$_4$}). Следует отметить, что в ${\Bbb R}^n$ при
$n\geqslant 2$ подобные оценки установлены, в основном, для всех
известных классов отображений, таких как аналитические функции на
плоскости, квазиконформные отображения, а также пространственные
отображения с ограниченным и конечным искажением (см.,
напр.,~\cite{HP}, \cite{Ri} и \cite{Pol}).

\medskip
Перейдём к определениям и формулировкам основных результатов.
Следующие понятия могут быть найдены, напр., в \cite{Lee} и
\cite{PSh}. Напомним, что {\it $n$-мерным топологическим
многообразием} ${\Bbb M}^n$ называется хаусдорфово топологическое
пространство со счётной базой, каждая точка которого имеет
окрестность, гомеоморфную некоторому открытому множеству в ${\Bbb
R}^n.$ {\it Картой} на многообразии ${\Bbb M}^n$ будем называть пару
$(U, \varphi),$ где $U$ --- открытое множество пространства ${\Bbb
M}^n$ и $\varphi$ --- соответствующий гомеоморфизм множества $U$ на
открытое множество в ${\Bbb R}^n.$ Если $p\in U$ и
$\varphi(p)=(x^1,\ldots,x^n)\in {\Bbb R}^n,$ то соответствующие
числа $x^1,\ldots,x^n$ называются {\it локальными координатами
точки} $p.$ {\it Гладким многообразием} называется само множество
${\Bbb M}^n$ вместе с соответствующим набором карт $(U_{\alpha},
\varphi_{\alpha}),$ так, что объединение всех $U_{\alpha}$ по
параметру $\alpha$ даёт всё ${\Bbb M}^n$ и, кроме того, отображение,
осуществляющее переход от одной системы локальных координат к
другой, принадлежит классу $C^{\infty}.$

\medskip
Напомним, что {\it римановой метрикой} на гладком многообразии
${\Bbb M}^n$ называется положительно определённое гладкое
симметричное тензорное поле типа $(0,2).$ В частности, компоненты
римановой метрики $g_{kl}$ в различных локальных координатах $(U,
x)$ и $(V, y)$ взаимосвязаны посредством тензорного закона
$$'g_{ij}(x)=g_{kl}(y(x))\frac{\partial y^k}{\partial x^i}
\frac{\partial y^l}{\partial x^j}.$$
{\it Римановым многообразием} будем называть гладкое многообразие
вместе с римановой метрикой на нём. Длину гладкой кривой
$\gamma=\gamma(t),$ $t\in [t_1, t_2],$ соединяющей точки
$\gamma(t_1)=M_1\in {\Bbb M}^n,$ $\gamma(t_2)=M_2\in {\Bbb M}^n$ и
$n$-мерный объём ({\it меру объёма $v$}) множества $A$ на римановом
многообразии определим согласно соотношениям
\begin{equation}\label{eq8}
l(\gamma):=\int\limits_{t_1}^{t_2}\sqrt{g_{ij}(x(t))\frac{dx^i}{dt}\frac{dx^j}{dt}}\,dt,\quad
v(A)=\int\limits_{A}\sqrt{\det g_{ij}}\,dx^1\ldots dx^n\,.
\end{equation}
Ввиду положительной определённости тензора $g=g_{ij}(x)$ имеем:
$\det g_{ij}>0.$ {\it Геодезическим расстоянием} между точками $p_1$
и $p_2\in {\Bbb M}^n$ будем называть наименьшую длину всех
кусочно-гладких кривых в ${\Bbb M}^n,$ соединяющих точки $p_1$ и
$p_2.$ Геодезическое расстояние между точками $p_1$ и $p_2$ будем
обозначать символом $d(p_1, p_2)$ (всюду далее $d$ обозначает
геодезическое расстояние, если не оговорено противное). Так как
риманово многообразие, вообще говоря, не предполагается связным,
расстояние между любыми точками многообразия, вообще говоря, может
быть не определено. Хорошо известно, что любая точка $p$ риманова
многообразия ${\Bbb M}^n$ имеет окрестность $U\ni p$ (называемую
далее {\it нормальной окрестностью точки $p$}) и соответствующее
координатное отображение $\varphi\colon U\rightarrow {\Bbb R}^n,$
так, что геодезические сферы с центром в точке $p$ и радиуса $r,$
лежащие в окрестности $U,$ переходят при отображении $\varphi$ в
евклидовы сферы того же радиуса, а пучок геодезических кривых,
исходящих из точки $p,$ переходит в пучок радиальных отрезков в
${\Bbb R}^n$ (см.~\cite[леммы~5.9 и 6.11]{Lee}, см.\ также
комментарии на стр.~77 здесь же). Локальные координаты
$\varphi(p)=(x^1,\ldots, x^n)$ в этом случае называются {\it
нормальными координатами} точки $p.$ Стоит отметить, что в случае
связного многообразия ${\Bbb M}^n$ открытые множества метрического
пространства $({\Bbb M}^n, d)$ порождают топологию исходного
топологического пространства ${\Bbb M}^n$
(см.~\cite[лемма~6.2]{Lee}). Заметим, что в нормальных координатах
всегда тензорная матрица $g_{ij}(x)$ в точке $p$ --- единичная (а в
силу непрерывности $g$ в точках, близких к $p,$ эта матрица сколь
угодно близка к единичной; см.~\cite[пункт~(c)
предложения~5.11]{Lee}).

\medskip
Пусть $X$ и $Y$ --- два топологических пространства. Отображение
$f\colon X\rightarrow Y$ называется {\it открытым}, если $f(A)$
открыто в $Y$ для любого открытого $A\subset X,$ и {\it дискретным},
если для каждого $y\in Y$ все точки множества $f^{\,-1}(y)$ имеют
попарно непересекающиеся окрестности.

\medskip
Пусть $\left(X,\,d\right)$ и
$\left(X^{\,{\prime}},{d}^{\,{\prime}}\right)$ --- метрические
пространства с расстояниями  $d$  и ${d}^{\,{\prime}}$
соответственно. Семейство $\frak{F}$ непрерывных отображений
$f\colon X\rightarrow {X}^{\,\,\prime}$ называется {\it нормальным},
если из любой последовательности отображений $f_{m} \in \frak{F}$
можно выделить подпоследовательность $f_{m_{k}}$, которая сходится
локально равномерно в $X$ (т.е., равномерно на любых компактных
подмножествах $X$) к непрерывной функции $f\colon \,X\,\rightarrow\,
X^{\,\prime}.$

\medskip
Введенное  понятие  очень  тесно  связано  со  следующим. Семейство
$\frak{F}$ отображений $f\colon X\rightarrow {X}^{\,\prime}$
называется {\it равностепенно непрерывным в точке} $x_0 \in X,$ если
для любого $\varepsilon>0$ найдётся такое $\delta>0$, что
${d}^{\,\prime} \left(f(x),f(x_0)\right)<\varepsilon$ для всех таких
$x,$ что $d(x,x_0)<\delta$ и для всех $f\in \frak{F}.$ Говорят, что
$\frak{F}$ {\it равностепенно непрерывно}, если $\frak{F}$
равностепенно непрерывно в каждой  точке $x_0\in X.$ Согласно одной
из версий теоремы Арцела--Асколи (см., напр.,
\cite[пункт~20.4]{Va}), если $\left(X,\,d\right)$ --- сепарабельное
метрическое пространство, а $\left(X^{\,\prime},\,
d^{\,\prime}\right)$ --- компактное метрическое пространство, то
семейство $\frak{F}$ отображений $f\colon X\rightarrow
{X}^{\,\prime}$ нормально тогда  и только тогда, когда  $\frak{F}$
равностепенно непрерывно. Здесь и далее равностепенная непрерывность
семейства отображений $\{f\colon {\Bbb M}^n\rightarrow {\Bbb
M}_*^n\}$ понимается в смысле геодезических расстояний $d$ и
$d^{\,\prime}$ на римановых многообразиях ${\Bbb M}^n$ и ${\Bbb
M}_*^n,$ соответственно.

\medskip
Пусть $\left(X,\,d, \mu\right)$ --- произвольное метрическое
пространство, наделённое мерой $\mu$ и $G(x_0, r)=\{x\in X: d(x,
x_0)<r\}.$ Следующее определение может быть найдено, напр., в
\cite[разд.~4]{RSa}. Будем говорить, что интегрируемая в $G(x_0, r)$
функция ${\varphi}\colon D\rightarrow{\Bbb R}$ имеет {\it конечное
среднее колебание} в точке $x_0\in D$, пишем $\varphi\in FMO(x_0),$
если
%
%
%
%
$$\limsup\limits_{\varepsilon\rightarrow 0}\frac{1}{\mu(G(
x_0,\,\varepsilon))}\int\limits_{G(x_0,\,\varepsilon)}
|{\varphi}(x)-\overline{{\varphi}}_{\varepsilon}|\,
d\mu(x)<\infty,$$
%
%
где
$\overline{{\varphi}}_{\varepsilon}=\frac{1} {\mu(G(
x_0,\,\varepsilon))}\int\limits_{G( x_0,\,\varepsilon)}
{\varphi}(x)\, d\mu(x).$

\medskip
Всюду далее (если не оговорено противное) ${\Bbb M}^n$ и ${\Bbb
M}_*^n$ -- римановы многообразия с геодезическими расстояниями $d$ и
$d_*,$ соответственно. {\it Кривой} $\gamma$ мы называем непрерывное
отображение отрезка $[a,b]$ (открытого интервала $(a,b),$ либо
полуоткрытого интервала вида $[a,b)$ или $(a,b]$) в ${\Bbb M}^n,$
$\gamma\colon [a,b]\rightarrow {\Bbb M}^n.$ Под семейством кривых
$\Gamma$ подразумевается некоторый фиксированный набор кривых
$\gamma,$ а, если $f\colon{\Bbb M}^n\rightarrow {\Bbb M}_*^n$ ---
произвольное отображение, то
$f(\Gamma)=\left\{f\circ\gamma|\gamma\in\Gamma\right\}.$ Длину
произвольной кривой $\gamma\colon [a, b]\rightarrow {\Bbb M}^n,$
лежащей на многообразии ${\Bbb M}^n,$ можно определить как точную
верхнюю грань сумм $\sum\limits_{i=1}^{n-1} d(\gamma(t_i),
\gamma(t_{i+1}))$ по всевозможным разбиениям $a\leqslant
t_1\leqslant\ldots\leqslant t_n\leqslant b.$ Следующие определения в
случае пространства ${\Bbb R}^n$ могут быть найдены, напр., в
\cite[разд.~1--6, гл.~I]{Va}, см.\ также \cite[гл.~I]{Fu}. Борелева
функция $\rho\colon {\Bbb M}^n\,\rightarrow [0,\infty]$ называется
{\it допустимой} для семейства $\Gamma$ кривых $\gamma$ в ${\Bbb
M}^n,$ если линейный интеграл по натуральному параметру $s$ каждой
(локально спрямляемой) кривой $\gamma\in \Gamma$ от функции $\rho$
удовлетворяет условию $\int\limits_{0}^{l(\gamma)}\rho
(\gamma(s))ds\geqslant 1.$ В этом случае мы пишем:
$\rho\in\mathrm{adm}\,\Gamma.$ {\it Мо\-ду\-лем} семейства кривых
$\Gamma $ называется величина
$$M(\Gamma)=\inf\limits_{\rho\in\mathrm{adm}\,\Gamma}
\int\limits_D \rho ^n (x)\,dv(x).$$
(Здесь и далее $v$ означает меру объёма, определённую в
(\ref{eq8})). При этом, если $\mathrm{adm}\,\Gamma=\varnothing,$ то
полагаем: $M(\Gamma)=\infty$ (см.~\cite[разд.~6 на с.~16]{Va} либо
\cite[с.~176]{Fu}). Свойства модуля в некоторой мере аналогичны
свойствам меры Лебега $m$ в ${\Bbb R}^n.$ Именно, модуль пустого
семейства кривых равен нулю, $M(\varnothing)=0,$ обладает свойством
монотонности относительно семейств кривых, %
$ \Gamma_1\subset\Gamma_2\Rightarrow M(\Gamma_1)\leqslant
M(\Gamma_2), $
а также свойством полуаддитивности:
$ M\left(\bigcup\limits_{i=1}^{\infty}\Gamma_i\right)\leqslant
\sum\limits_{i=1}^{\infty}M(\Gamma_i) $
(см.~\cite[теорема~6.2, гл.~I]{Va} в ${\Bbb R}^n$ либо
\cite[теорема~1]{Fu} в случае более общих пространств с мерами).
Говорят, что семейство кривых $\Gamma_1$ \index{минорирование}{\it
минорируется} семейством $\Gamma_2,$ пишем $\Gamma_1\,>\,\Gamma_2,$
если для каждой кривой $\gamma\,\in\,\Gamma_1$ существует подкривая,
которая принадлежит семейству $\Gamma_2.$
В этом случае,
\begin{equation}\label{eq32*A}
\Gamma_1
> \Gamma_2 \quad \Rightarrow \quad M(\Gamma_1)\leqslant M(\Gamma_2)
\end{equation} (см.~\cite[теорема~6.4, гл.~I]{Va} либо
\cite[свойство~(c)]{Fu} в случае более общих пространств с мерами).

\medskip{}
Следующее определение для случая ${\Bbb R}^n$ может быть найдено,
напр., в работе \cite{SS}. Пусть ${\Bbb M}^n$ и ${\Bbb M}_*^n$ ---
римановы многообразия{\em,} $n\geqslant 2,$ $D$ -- область в ${\Bbb
M}^n,$ $x_0\in D,$ $Q\colon D\rightarrow [0,\infty]$ --- измеримая
относительно меры объёма $v$ функция, и число $r_0>0$ таково, что
замкнутый шар $\overline{B(x_0, r_0)}$ лежит в некоторой нормальной
окрестности $U$ точки $x_0.$ Пусть также $0<r_1<r_2<r_0,$
$A=A(r_1,r_2, x_0)=\{x\in {\Bbb M}^n: r_1<d(x, x_0)<r_2\},$
$S_i=S(x_0,r_i),$ $i=1,2,$ --- геодезические сферы с центром в точке
$x_0$ и радиусов $r_1$ и $r_2,$ соответственно, а
$\Gamma\left(S_1,\,S_2,\,A\right)$ обозначает семейство всех кривых,
соединяющих $S_1$ и $S_2$ внутри области $A.$
%
Отображение $f\colon D\rightarrow {\Bbb M}_*^n$ условимся называть
{\it кольцевым $Q$-отображением в точке $x_0\,\in\,D,$} если
соотношение
%
$$M\left(f\left(\Gamma\left(S_1,\,S_2,\,A\right)\right)\right)\ \leqslant
\int\limits_{A} Q(x)\cdot \eta^n(d(x, x_0))\ dv(x)$$
выполнено в кольце $A$ для произвольных $r_1,r_2,$ указанных выше, и
для каждой измеримой функции $\eta \colon  (r_1,r_2)\rightarrow
[0,\infty ]\,$ такой, что
$\int\limits_{r_1}^{r_2}\eta(r)dr\geqslant 1.$ Отображения типа
кольцевых $Q$-отображений были предложены к изучению О.~Мартио и
изучались им совместно с В.~Рязановым, У.~Сребро и Э.~Якубовым,
см.~\cite{MRSY}, см.\ также \cite{BGMV}. В пространстве ${\Bbb R}^n$
кольцевые $Q$-отображения являются обобщением аналитических функций
на плоскости ($Q\equiv 1$), квазиконформных отображений ($Q\leqslant
K=\mathrm{const}$) и отображений с ограниченным искажением
($Q\leqslant K=\mathrm{const}$) (см.~\cite[теорема~1,
лемма~6]{Pol}). Кроме того, вторым автором данной статьи было
показано, что в области $D\subset {\Bbb R}^n$ произвольные открытые
дискретные отображения $f\colon D\rightarrow {\Bbb R}^n$ класса
$W_{loc}^{1, n},$ имеющие меру множества точек ветвления, равную
нулю, и так называемая внешняя дилатация $K_O(x, f)$ которых
локально суммируема в степени $n-1,$ являются кольцевыми
$Q$-отображениями при $Q=K_O^{n-1}(x, f)$
(\cite[следствие~2]{Sev$_4$}). Для гомеоморфизмов класса
$W_{loc}^{1, n}$ таких, что $f^{\,-1}\in W_{loc}^{1, n}$ этот
результат установлен О.~Мартио, В.~Рязановым, У.~Сребро и
Э.~Якубовым (\cite[теоремы 8.1 и 8.6]{MRSY}) и Е.~Афанасьевой
(Е.~Смоловой) на римановых многообразиях
(см.~\cite[лемма~6]{Af$_1$}).

\medskip{}
Пусть $(X, d, \mu)$ --- метрическое пространство с метрикой $d,$
наделённое локально конечной борелевской мерой $\mu.$ Следуя
\cite[раздел 7.22]{He} будем говорить, что борелева функция
$\rho\colon  X\rightarrow [0, \infty]$ является {\it верхним
градиентом} функции $u\colon X\rightarrow {\Bbb R},$ если для всех
спрямляемых кривых $\gamma,$ соединяющих точки $x$ и $y\in X$
выполняется неравенство $|u(x)-u(y)|\leqslant
\int\limits_{\gamma}\rho\,ds,$ где, как обычно,
$\int\limits_{\gamma}\rho\,ds$ обозначает линейный интеграл от
функции $\rho$ по кривой $\gamma.$ Будем также говорить, что в
указанном пространстве $X$ выполняется $(1; p)$-неравенство
Пуанкаре, если найдутся постоянные $C\geqslant 1$ и $\tau>0$ так,
что для каждого шара $B\subset X,$ произвольной локально
ограниченной непрерывной функции $u\colon X\rightarrow {\Bbb R}$ и
любого её верхнего градиента $\rho$ выполняется следующее
неравенство:
$$\frac{1}{\mu(B)}\int\limits_{B}|u-u_B|d\mu(x)\leqslant C\cdot({\rm diam\,}B)\left(\frac{1}{\mu(\tau B)}
\int\limits_{\tau B}\rho^p d\mu(x)\right)^{1/p}\,,$$
где $u_B:=\frac{1}{\mu(B)}\int\limits_{B}u d\mu(x).$
Метрическое пространство $(X, d, \mu)$ назовём {\it
$\widetilde{Q}$-регулярным по Альфорсу} при некотором
$\widetilde{Q}\geqslant 1,$ если при каждом $x_0\in X,$ некоторой
постоянной $C\geqslant 1$ и произвольного $R<{\rm diam}\,X$
$$\frac{1}{C}R^{\widetilde{Q}}\leqslant \mu(B(x_0, R))\leqslant CR^{\widetilde{Q}}.$$
Заметим, что локально римановы многообразия являются $n$-регулярными
по Альфорсу (см.~\cite[лемма~5.1]{ARS}). Следует также заметить, что
если риманово многообразие $\widetilde{Q}$-регулярно по Альфорсу, то
$\widetilde{Q}=n$ (см.\ рассуждения на с.~61 в \cite{He} о
совпадении $\widetilde{Q}$ с хаусдорфовой размерностью пространства
$X,$ а также \cite[лемма~5.1]{ARS} о совпадении топологической и
хаусдорфовых размерностей областей риманового многообразия).
Справедлива следующая

\medskip
\begin{theorem}\label{theor4*!} {\sl\,
Пусть ${\Bbb M}^n$ и ${\Bbb M}_*^n$ --- римановы многообразия{\em,}
$n\geqslant 2,$ многообразие ${\Bbb M}_*^n$ связно{\em,} является
$n$-регулярным по Альфорсу{\em,} кроме того{\em,} в ${\Bbb M}_*^n$
выполнено $(1;n)$-неравенство Пуанкаре. Пусть $B_R\subset {\Bbb
M}_*^n$ --- некоторый фиксированный шар радиуса $R,$ $K$ --
фиксированный невырожденный континуум в $B_R,$ $D$
--- область в ${\Bbb M}^n$ и $Q\colon D\rightarrow [0, \infty]$ ---
функция{\em,} измеримая относительно меры объёма $v.$ Обозначим
через $\frak{R}_{x_0, Q, B_R, K}(D)$ семейство открытых дискретных
кольцевых $Q$-отображений $f\colon D\rightarrow B_R\setminus K$ в
точке $x_0\in D.$ Тогда семейство отображений $\frak{R}_{x_0, Q,
B_R, K}(D)$ является равностепенно непрерывным в точке $x_0\in D,$
если $Q\in FMO(x_0).$}
\end{theorem}

\medskip
Ввиду теоремы Арцела--Асколи (см., напр., \cite[пункт~20.4]{Va})
имеем также следующее

\medskip
\begin{corollary}\label{cor1} {\sl\, Предположим, что в условиях
теоремы~{\ref{theor4*!}} многообразие ${\Bbb M}_*^n$ является
компактным. Если $\frak{R}_{D, Q, B_R, K}(D)$ --- семейство{\em,}
состоящее из всех открытых дискретных отображений $f\colon
D\rightarrow B_R,$ являющихся кольцевыми $Q$-отоб\-ра\-же\-ни\-я\-ми
в каждой точке $x_0\in D$ и{\em,} кроме того{\em,} $Q\in FMO(x_0)$ в
каждой точке $x_0\in D,$ то класс $\frak{R}_{D, Q, B_R, K}(D)$
образует нормальное семейство отображений. }
\end{corollary}

\medskip
{\bf 2. Вспомогательные леммы}. Всюду далее ${\Bbb M}^n$ ---
риманово многообразие при $n\geqslant 2,$ $d$ --- геодезическое
расстояние на ${\Bbb M}^n,$
 \begin{gather*}
B(x_0, r)=\left\{x\in{\Bbb M}^n: d(x, x_0)<r\right\},\quad S(x_0,r)
= \{ x\,\in\,{\Bbb M}^n: d(x, x_0)=r\},
\end{gather*}
${\rm diam}\,A$ --- геодезический диаметр множества $A\subset {\Bbb
M}^n.$ Всюду далее граница $\partial D$ области $D\subset {\Bbb
M}^n$ и замыкание $\overline{D}$ области $D$ понимаются в смысле
геодезического расстояния $d.$ Перед тем, как мы приступим к
изложению вспомогательных результатов и основной части данного
раздела, дадим ещё одно важное определение (см.~\cite[раздел~3,
гл.~II]{Ri}). Пусть $D$ --- область риманового многообразия ${\Bbb
M}^n,$ $n\geqslant 2,$ $f\colon D \rightarrow {\Bbb M}_*^n$ ---
отображение, $\beta\colon [a,\,b)\rightarrow {\Bbb M}_*^n$ ---
некоторая кривая и $x\in\,f^{\,-1}\left(\beta(a)\right).$ Кривая
$\alpha\colon [a,\,c)\rightarrow D,$ $c\leqslant b,$ называется {\it
максимальным поднятием} кривой $\beta$ при отображении $f$ с началом
в точке $x,$ если $(1)\quad \alpha(a)=x;$ $(2)\quad
f\circ\alpha=\beta|_{[a,\,c)};$ $(3)$\quad если
$c<c^{\prime}\leqslant b,$ то не существует кривой
$\alpha^{\prime}\colon [a,\,c^{\prime})\rightarrow D,$ такой что
$\alpha=\alpha^{\prime}|_{[a,\,c)}$ и $f\circ
\alpha=\beta|_{[a,\,c^{\prime})}.$ Имеет место следующее

\medskip
\begin{proposition}\label{pr7}
{\sl Пусть ${\Bbb M}^n$ и ${\Bbb M}_*^n$ --- римановы
многообразия{\em,} $n\geqslant 2,$ $D$ --- область в ${\Bbb M}^n,$
$f\colon D\rightarrow {\Bbb M}^n_*$ --- открытое дискретное
отображение{\em,} $\beta\colon [a,\,b)\rightarrow {\Bbb M}_*^n$ ---
кривая и точка $x\in\,f^{-1}\left(\beta(a)\right).$ Тогда кривая
$\beta$ имеет максимальное поднятие при отображении $f$ с началом в
точке $x.$ }
\end{proposition}

\medskip
\begin{proof} Зафиксируем точку $x_0\in {\Bbb M}^n$ и рассмотрим $f(x_0)\in {\Bbb
M}_*^n.$ Поскольку точка $f(x_0)$ принадлежит многообразию ${\Bbb
M}_*^n,$ найдётся окрестность $V$ этой точки, гомеоморфная множеству
$\psi(V)\subset {\Bbb R}^n.$ В силу непрерывности отображения $f,$
найдётся окрестность $U$ точки $x_0,$ такая что $f(U)\subset V.$ С
другой стороны, не ограничивая общности, можно считать, что $U$
гомеоморфна открытому множеству $\varphi(U)$ в ${\Bbb R}^n.$ Можно
также считать, что $\varphi(U)$ и $\psi(V)$ являются областями в
${\Bbb R}^n,$ тогда $f^*=\psi\circ f\circ\varphi^{\,-1}$ ---
открытое дискретное отображение между областями $\varphi(U)$ и
$\psi(V)$ в ${\Bbb R}^n.$ Для таких отображений существование
максимальных поднятий локально вытекает из соответствующего
результата Рикмана в $n$-мерном евклидовом пространстве
(см.~\cite[шаг~2 доказательства теоремы~3.2 гл.~II]{Ri}). Отсюда
вытекает локальное существование максимальных поднятий и на
многообразиях. Глобальное существование максимальных поднятий может
быть установлено аналогично доказательству шага 1 указанной выше
теоремы.~$\Box$
\end{proof}

Напомним определения классов Соболева и отображений с конечным
искажением, которые будут необходимы нам в дальнейшем
(см.~\cite{ARS}, \cite{IM}  и \cite{Maz}).

Пусть $U$ --- открытое множество, $U\subset {\Bbb R}^n,$ $u\colon
U\rightarrow {\Bbb R}$ --- некоторая функция, $u\in
L_{loc}^{\,1}(U).$ Предположим, что найдётся функция $v\in
L_{loc}^{\,1}(U),$ такая что
$$\int\limits_U \frac{\partial \varphi}{\partial x_i}(x)u(x)dm(x)=
-\int\limits_U \varphi(x)v(x)dm(x)$$
для любой функции $\varphi\in C_1^{\,0}(U).$ Тогда будем говорить,
что функция $v$ является {\it обобщённой производной первого порядка
функции $u$ по переменной $x_i,$} и обозначать символом:
$\frac{\partial u}{\partial x_i}(x):=v.$

Функция $u\in W_{loc}^{1,1}(U),$ если $u$ имеет обобщённые
производные первого порядка по каждой из переменных в $U,$ которые
являются локально интегрируемыми в $U.$

Пусть $G$ --- открытое множество в ${\Bbb R}^n.$ Отображение
$f\colon D\rightarrow {\Bbb R}^n$ принадлежит {\it классу Соболева}
$W^{1,1}_{loc}(G),$ пишут $f\in W^{1,1}_{loc}(G),$ если все
координатные функции $f=(f_1,\ldots,f_n)$ обладают обобщёнными
частными производными первого порядка, которые локально интегрируемы
в $G$ в первой степени.

\medskip
Пусть теперь ${\Bbb M}^n$ и ${\Bbb M}_*^n$ --- римановы многообразия
и область $D\subset {\Bbb M}^n,$ тогда будем говорить, что
отображение $f\colon D\rightarrow {\Bbb M}^n_*$ принадлежит классу
$f\in W^{1, 1}_{loc}(D),$ если каждая пара точек $p\in D$ и $f(p)\in
f(D)$ имеют окрестности $U\subset D,$ $V\subset f(D),$ в которых
$\psi\circ f\circ \varphi^{-1}\in W^{1,1}_{loc}(\varphi(U)),$ где
$\varphi\colon U\rightarrow {\Bbb R}^n$ и $\psi\colon V\rightarrow
{\Bbb R}^n$ --- соответствующие карты, переводящие $U$ и $V$ в
некоторые открытые подмножества ${\Bbb R}^n.$

\medskip{}
Пусть $A$ --- открытое подмножество многообразия ${\Bbb M}^n,$ а $C$
--- компактное подмножество $A.$  {\it Конденсатором} будем называть
пару множеств $E=\left(A,\,C\right).$ {\it Ёмкостью} конденсатора
$E$ называется следующая величина:
\begin{equation}\label{eq1.1AB} {\rm cap}\,E={\rm
cap}\,\left(A,\,C\right)= \inf\limits_{u\in W_0(E)}\,\,\int\limits_A
|\nabla u(x)|^n\,\,dv(x),
\end{equation}
где $W_0(E)=W_0\left(A,\,C\right)$ --- семейство неотрицательных
непрерывных функций $u\colon A\rightarrow{\Bbb R}$ с компактным
носителем в $A,$ таких что $u(x)\geqslant 1$ при $x\in C$ и $u\in
W_{loc}^{1, 1}.$
В формуле~\eqref{eq1.1AB}, как обычно, $|\nabla
u|={\left(\sum\limits_{i=1}^n\,{\left(\partial_i u\right)}^2
\right)}^{1/2}.$

Следующее утверждение имеет важное значение для доказательства
дальнейших результатов (см.~\cite[предложение~10.2 и замечание~10.8,
гл.~II]{Ri}).

\medskip
 \begin{proposition}\label{pr1*!}
{\,\sl Пусть ${\Bbb M}^n$ --- риманово многообразие{\em,}
$n\geqslant 2,$ $E=(A,\,C)$ --- произвольный конденсатор в ${\Bbb
M}^n$ и пусть $\Gamma_E$ --- семейство всех кривых вида
$\gamma\colon [a,\,b)\rightarrow A,$ таких что $\gamma(a)\in C$ и
$|\gamma|\cap\left(A\setminus F\right)\ne\varnothing$ для
произвольного компакта $F\subset A.$
Тогда
$ {\rm cap}\,E=M(\Gamma_E). $
}
\end{proposition}

\medskip{}
Следующая лемма может быть полезной при исследовании свойства
равностепенной непрерывности открытых дискретных кольцевых
$Q$-отоб\-ра\-же\-ний в наиболее общей ситуации. Её доказательство
аналогично случаю ${\Bbb R}^n$ (см. \cite[лемма~4.1]{SS}), однако,
для полноты и строгости изложения мы приводим его полностью для
случая произвольных римановых многообразий.

\medskip
\begin{lemma}\label{lem4}{\sl\,
Пусть ${\Bbb M}^n$ и ${\Bbb M}_*^n$ --- римановы многообразия{\em,}
$n\geqslant 2,$ $D$ --- область в ${\Bbb M}^n,$ $f\colon
D\rightarrow{\Bbb M}_*^n$ --- открытое дискретное кольцевое
$Q$-отоб\-ра\-же\-ние в точке $x_0\in D,$ $r_0>0$ таково{\em,} что
шар $B(x_0, r_0)$ лежит со своим замыканием в некоторой нормальной
окрестности $U$ точки $x_0.$ Предположим{\em,} что для некоторого
числа $0<\varepsilon_0<r_0,$ некоторого $\varepsilon_0^{\,\prime}\in
(0, \varepsilon_0)$ и семейства неотрицательных измеримых по Лебегу
функций $\{\psi_{\varepsilon}(t)\},$ $\psi_{\varepsilon}\colon
(\varepsilon, \varepsilon_0)\rightarrow [0, \infty],$
$\varepsilon\in\left(0, \varepsilon_0^{\,\prime}\right),$ выполнено
условие
 \begin{equation} \label{eq3.7B}
\int\limits_{\varepsilon<d(x, x_0)<\varepsilon_0}
Q(x)\cdot\psi_{\varepsilon}^n(d(x, x_0))\, dv(x)\leqslant
F(\varepsilon, \varepsilon_0)\qquad\forall\,\,\varepsilon\in(0,
\varepsilon_0^{\,\prime}),
 \end{equation}
где $F(\varepsilon, \varepsilon_0)$ --- некоторая функция и
\begin{equation}\label{eq3AB} 0<I(\varepsilon, \varepsilon_0):=
\int\limits_{\varepsilon}^{\varepsilon_0}\psi_{\varepsilon}(t)dt <
\infty\qquad\forall\,\,\varepsilon\in(0,
\varepsilon_0^{\,\prime}).\end{equation}
Тогда
\begin{equation}\label{eq3B}
{\rm cap}\,f(E)\leqslant F(\varepsilon,\varepsilon_0)/
I^{n}(\varepsilon, \varepsilon_0)\qquad
\forall\,\,\varepsilon\in\left(0,\,\varepsilon_0^{\,\prime}\right),
\end{equation}
где $E=\left(A,\,C\right)$ --- конденсатор{\em,} $A=B(x_0,
\varepsilon_0)$ и $C=\overline{B(x_0, \varepsilon)}.$
}
\end{lemma}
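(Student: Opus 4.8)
The plan is to reduce the capacity of the image condenser to a modulus of a curve family and then to estimate that modulus through the defining inequality of a ring $Q$-mapping, using a suitable admissible function built from $\psi_{\varepsilon}$. Put $\Gamma_{\varepsilon}:=\Gamma(S_{\varepsilon},S_{\varepsilon_0},A(\varepsilon,\varepsilon_0,x_0))$, where $S_{\varepsilon}=S(x_0,\varepsilon)$ and $S_{\varepsilon_0}=S(x_0,\varepsilon_0)$. First I would check that $f(E)=(f(A),f(C))$ is again a condenser: $f(A)=f(B(x_0,\varepsilon_0))$ is open since $f$ is open, while $C=\overline{B(x_0,\varepsilon)}$ is compact and contained in $A$ (as $\varepsilon<\varepsilon_0$), so $f(C)$ is a compact subset of $f(A)$. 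Hence Proposition~\ref{pr1*!} applies to $f(E)$ and yields ${\rm cap}\,f(E)=M(\Gamma_{f(E)})$, where $\Gamma_{f(E)}$ is the family of all curves in $f(A)$ that start on $f(C)$ and leave every compact subset of $f(A)$.

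The decisive step is the minorization $\Gamma_{f(E)}>f(\Gamma_{\varepsilon})$. Given an arbitrary $\widetilde{\beta}\in\Gamma_{f(E)}$, I would choose $x_*\in C$ with $f(x_*)=\widetilde{\beta}(a)$ and use Proposition~\ref{pr7} to obtain a maximal $f$-lift $\alpha\colon[a,c)\to D$ of $\widetilde{\beta}$ with $\alpha(a)=x_*$. Since $\overline{B(x_0,\varepsilon_0)}\subset U$ is compact, $A=B(x_0,\varepsilon_0)$ is relatively compact in $D$, and I claim that for every $\rho<\varepsilon_0$ the lift $\alpha$ eventually leaves the compact ball $\overline{B(x_0,\rho)}$. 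If $c=b$, then $\widetilde{\beta}=f\circ\alpha$ leaves every compact subset of $f(A)$, which forces $\alpha$ to leave every compact subset of $A$; if $c<b$, then maximality (condition~(3) in the definition of a lift) prevents $\alpha$ from being confined to a compact subset of $D$, since a confined lift would extend via the local homeomorphism structure of open discrete maps. In both cases $d(\alpha(t),x_0)$ starts at a value $\leqslant\varepsilon$ and comes arbitrarily close to $\varepsilon_0$; by continuity $\alpha$ traverses the whole ring $A(\varepsilon,\varepsilon_0,x_0)$ and contains a subcurve $\gamma\in\Gamma_{\varepsilon}$. Then $f\circ\gamma$ is a subcurve of $\widetilde{\beta}$ lying in $f(\Gamma_{\varepsilon})$, so $\Gamma_{f(E)}>f(\Gamma_{\varepsilon})$ and, by~\eqref{eq32*A}, $M(\Gamma_{f(E)})\leqslant M(f(\Gamma_{\varepsilon}))$.

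It then remains to estimate $M(f(\Gamma_{\varepsilon}))$. Here I would invoke the defining inequality of a ring $Q$-mapping at $x_0$ with radii $r_1=\varepsilon$, $r_2=\varepsilon_0$ and with the test function $\eta(t)=\psi_{\varepsilon}(t)/I(\varepsilon,\varepsilon_0)$, which is admissible because $\int_{\varepsilon}^{\varepsilon_0}\eta(t)\,dt=1$ by~\eqref{eq3AB}. This produces
$$M(f(\Gamma_{\varepsilon}))\leqslant\int\limits_{A(\varepsilon,\varepsilon_0,x_0)}Q(x)\,\eta^n(d(x,x_0))\,dv(x)=\frac{1}{I^n(\varepsilon,\varepsilon_0)}\int\limits_{\varepsilon<d(x,x_0)<\varepsilon_0}Q(x)\,\psi_{\varepsilon}^n(d(x,x_0))\,dv(x)\leqslant\frac{F(\varepsilon,\varepsilon_0)}{I^n(\varepsilon,\varepsilon_0)},$$
where the final inequality is exactly hypothesis~\eqref{eq3.7B}. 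Chaining the three relations ${\rm cap}\,f(E)=M(\Gamma_{f(E)})\leqslant M(f(\Gamma_{\varepsilon}))\leqslant F(\varepsilon,\varepsilon_0)/I^n(\varepsilon,\varepsilon_0)$ yields~\eqref{eq3B} for every $\varepsilon\in(0,\varepsilon_0^{\,\prime})$.

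The main obstacle is the middle step: making rigorous that the maximal lift actually reaches (or properly approaches) the outer sphere $S_{\varepsilon_0}$ and hence supplies a subcurve in $\Gamma_{\varepsilon}$. This is precisely where openness and discreteness of $f$ enter in an essential way, through the existence and boundary behaviour of maximal lifts furnished by Proposition~\ref{pr7}; one must treat separately the complete lift ($c=b$) and the prematurely terminating lift ($c<b$), and handle the borderline case in which $d(\alpha(t),x_0)$ tends to $\varepsilon_0$ without attaining it.
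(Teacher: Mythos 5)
Your overall strategy coincides with the paper's: reduce ${\rm cap}\,f(E)$ to $M(\Gamma_{f(E)})$ via Proposition~\ref{pr1*!}, minorize $\Gamma_{f(E)}$ by the image of a ring family using maximal lifts from Proposition~\ref{pr7}, and then apply the ring $Q$-map inequality with $\eta=\psi_{\varepsilon}/I(\varepsilon,\varepsilon_0)$. The genuine gap is your central minorization $\Gamma_{f(E)}>f(\Gamma_{\varepsilon})$ with $\Gamma_{\varepsilon}=\Gamma(S_{\varepsilon},S_{\varepsilon_0},A(\varepsilon,\varepsilon_0,x_0))$: what the lifting argument actually gives is that a maximal lift $\alpha$ leaves every compact subset of the open ball $B(x_0,\varepsilon_0)$, and such a curve need not attain the sphere $S(x_0,\varepsilon_0)$, hence need not contain a subcurve joining $S_{\varepsilon}$ to $S_{\varepsilon_0}$ inside the ring. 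You flag this ``borderline case'' as the main obstacle but do not close it, and as written the step fails. The paper's resolution is the $\delta$-shrinking: since the lift escapes every compact subset, it does reach $S(x_0,\varepsilon_0-\delta)$ for every small $\delta>0$, so $\Gamma^{*}>\Gamma(S_{\varepsilon},S_{\varepsilon_0-\delta},A(\varepsilon,\varepsilon_0-\delta,x_0))$; one applies the ring inequality with the renormalized function $\eta_{\delta}(t)=\eta(t)/\int_{\varepsilon}^{\varepsilon_0-\delta}\eta(s)\,ds$ for an arbitrary admissible $\eta$, lets $\delta\to 0$, and only then substitutes $\eta=\psi_{\varepsilon}/I(\varepsilon,\varepsilon_0)$ and invokes~\eqref{eq3.7B}. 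This limiting step must be added for the argument to be complete.

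A second, smaller defect: your justification that a prematurely terminating maximal lift ($c<b$) cannot stay in a compact set appeals to ``the local homeomorphism structure of open discrete maps'', but open discrete maps are not local homeomorphisms in general (they may have a nonempty branch set). The correct argument, which the paper gives, is via the cluster set: if $\alpha([a,c))$ were contained in a compact $K\subset B(x_0,\varepsilon_0)$, the cluster set $G$ of $\alpha(t)$ as $t\to c$ is a nonempty connected compact set on which $f\equiv\beta(c)$, so by discreteness of $f$ the set $G$ is a single point; then $\alpha$ extends continuously to $[a,c]$ with $f(\alpha(c))=\beta(c)$, and Proposition~\ref{pr7} produces a further lift of $\beta|_{[c,b)}$ starting at $\alpha(c)$, contradicting maximality. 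With these two repairs your proof becomes the paper's proof.
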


\medskip
\begin{proof}
Рассмотрим конденсатор $E=(A,\,C),$ где $A$ и $C$ таковы, как
указано в условии леммы. Если ${\rm cap}\,f(E)=0,$ доказывать
нечего. Пусть ${\rm cap}\,f(E)\ne 0.$

\medskip
Пусть $\Gamma_E$ --- семейство всех кривых вида $\gamma\colon
[a,\,b)\rightarrow A,$ таких что $\gamma(a)\in C$ и
$|\gamma|\cap\left(A\setminus F\right)\ne\varnothing$ для
произвольного компакта $F\subset A,$ где $|\gamma|=\{x\in {\Bbb
M}^n: \exists\,t\in [a, b): \gamma(t)=x \}$ --- носитель кривой
$\gamma.$ Напомним, что ${\rm cap}\,E=M(\Gamma_E)$ (см.\ предложение
\ref{pr1*!}). Для конденсатора $f(E)$ рассмотрим семейство кривых
$\Gamma_{f(E)}.$ Заметим также, что каждая кривая
$\gamma\in\Gamma_{f(E)}$ имеет максимальное поднятие при отображении
$f,$ лежащее в $A$ с началом в $C$ (см.\ предложение~\ref{pr7}).
Пусть $\Gamma^{\,*}$ --- семейство всех максимальных под\-ня\-тий
кривых $\Gamma_{f(E)}$ при отображении $f$ с началом в $C.$ Покажем,
что $\Gamma^{*}\subset \Gamma_E.$

\medskip
Предположим противное, т.е., что существует кривая $\beta\colon
[a,\,b)\rightarrow {\Bbb R}^n$ семейства $\Gamma_{f(E)},$ для
которой соответствующее максимальное поднятие $\alpha\colon
[a,\,c)\rightarrow B(x_0, \varepsilon_0)$ лежит в некотором компакте
$K$ внутри $B(x_0, \varepsilon_0).$ Следовательно, его замыкание
$\overline{\alpha}$ --- компакт в $B(x_0, \varepsilon_0).$ Заметим,
что $c\ne b,$ поскольку в противном случае $\overline{\beta}$ ---
ком\-пакт в $f\left(B(x_0, \varepsilon_0)\right),$ что противоречит
условию $\beta\,\in\,\Gamma_{f(E)}.$ Рассмотрим предельное множество
$G$ кривой $\alpha$ при $t_k\rightarrow c,$
$$G=\left\{x\in {\Bbb R}^n: x=\lim\limits_{k\rightarrow\,\infty} \alpha(t_k)
 \right\},\quad t_k\,\in\,[a,\,c),\quad
 \lim\limits_{k\rightarrow\infty}t_k=c.$$
Отметим, что переходя к подпоследовательностям, здесь можно
ограничиться монотонными последовательностями $t_k.$ Для $x\in G,$ в
силу непрерывности $f,$ будем иметь
$f\left(\alpha(t_k)\right)\rightarrow\,f(x)$ при
$k\rightarrow\infty,$ где $t_k\in[a,\,c),\,t_k\rightarrow c$ при
$k\rightarrow \infty.$ Однако,
$f\left(\alpha(t_k)\right)=\beta(t_k)\rightarrow\beta(c)$ при
$k\rightarrow\infty.$ Отсюда заключаем, что $f$ постоянна на $G$ в
$B(x_0, \varepsilon_0).$ С другой стороны, по условию Кантора в
компакте $\overline{\alpha}$  (см.~\cite[разд.~3.6, гл.~I]{Wh}),
$$G\,=\,\bigcap\limits_{k\,=\,1}^{\infty}\,\overline{\alpha\left(\left[t_k,\,c\right)\right)}=
\limsup\limits_{k\rightarrow\infty}\alpha\left(\left[t_k,\,c\right)\right)=
\liminf\limits_{k\rightarrow\infty}\alpha\left(\left[t_k,\,c\right)\right)\ne\varnothing
$$
%
в виду монотонности последовательности связных множеств
$\alpha\left(\left[t_k,\,c\right)\right),$ откуда следует, что $G$
является связным согласно \cite[разд.~9.12, гл.~I]{Wh}. Таким
образом, в силу дискретности $f,$ $G$ не может состоять более чем из
одной точки, и кривая $\alpha\colon [a,\,c)\rightarrow\,B(x_0,
\varepsilon_0)$ продолжается до замкнутой кривой $\alpha\colon
[a,\,c]\rightarrow K\subset B(x_0, \varepsilon_0),$ причём
$f\left(\alpha(c)\right)=\beta(c).$ Снова по предложению~\ref{pr7}
можно построить максимальное поднятие $\alpha^{\,\prime}$ кривой
$\beta|_{[c,\,b)}$ с началом в точке $\alpha(c).$ Объединяя поднятия
$\alpha$ и $\alpha^{\,\prime},$ получаем новое поднятие
$\alpha^{\,\prime\prime}$ кривой $\beta,$ которое определено на $[a,
c^{\prime}),$ \,\,$c^{\,\prime}\,\in\,(c,\,b),$ что противоречит
максимальности поднятия $\alpha.$ Таким образом,
$\Gamma^{\,*}\subset\Gamma_E.$

Заметим, что $\Gamma_{f(E)}>f(\Gamma^{*}),$ и, следовательно, ввиду
свойства~\eqref{eq32*A}
\begin{equation}\label{eq7AB}
M\left(\Gamma_{f(E)}\right)\leqslant M\left(f(\Gamma^{*})\right).
\end{equation}
Рассмотрим
$S_{\,\varepsilon}=S(x_0,\,\varepsilon) = \{ x\,\in\,{\Bbb M}^n :
d(x, x_0)=\,\varepsilon\},$ $
S_{\,\varepsilon_{0}}=S(x_0,\,\varepsilon_0) = \{ x\,\in\,{\Bbb M}^n
: d(x, x_0)=\,\varepsilon_0\},$
где $\varepsilon_0$ --- из условия леммы и
$\varepsilon\in\left(0,\,\varepsilon_0^{\,\prime}\right).$ Всюду
ниже, как и прежде, полагаем
$$A(r_1, r_2, x_0)=\{x\in {\Bbb M}^n: r_1<d(x, x_0)<r_2\}.$$ Заметим,
что, поскольку $\Gamma^{*}\,\subset\,\Gamma_E,$ то при всех
достаточно малых $\delta>0$ будем иметь, что
$\Gamma\left(S_{\varepsilon}, S_{\varepsilon_0-\delta},
A(\varepsilon, \varepsilon_0-\delta, x_0)\right)<\Gamma^{*}$ и,
следовательно, $f(\Gamma\left(S_{\varepsilon},
S_{\varepsilon_0-\delta}, A(\varepsilon, \varepsilon_0-\delta,
x_0)\right))<f(\Gamma^{\,*})$ (здесь мы положили
$S_{\varepsilon_0-\delta}:=\{x\in {\Bbb R}^n:
|x-x_0|<\varepsilon_0-\delta\}$). Значит,
\begin{equation}\label{eq3.3.1}
M\left(f(\Gamma^{*})\right)\leqslant
 M\left(f\left(\Gamma\left(S_{\varepsilon}, S_{\varepsilon_0-\delta},
 A(\varepsilon, \varepsilon_0-\delta, x_0)\right)\right)\right).
\end{equation}
Из соотношений~\eqref{eq7AB} и~\eqref{eq3.3.1} следует, что
$$
M\left(\Gamma_{f(E)}\right)\,\leqslant
 M\left(f\left(\Gamma\left(S_{\varepsilon}, S_{\varepsilon_0-\delta},
 A(\varepsilon, \varepsilon_0-\delta, x_0)\right)\right)\right)
$$
и, таким образом, по предложению \ref{pr1*!}
\begin{equation}\label{eq5aa}
{\rm cap}\,\,f(E) \leqslant
 M\left(f\left(\Gamma\left(S_{\varepsilon}, S_{\varepsilon_0-\delta},
 A(\varepsilon, \varepsilon_0-\delta, x_0)\right)\right)\right).
\end{equation}
Пусть $\eta(t)$ произвольная неотрицательная измеримая функция,
удовлетворяющая условию
$\int\limits_{\varepsilon}^{\varepsilon_0}\eta(t)dt=1.$ Рассмотрим
семейство измеримых функций
$\eta_{\delta}(t)=\frac{\eta(t)}{\int\limits_{\varepsilon}^{\varepsilon_0-\delta}\eta(t)dt}.$
(Так как $\int\limits_{\varepsilon}^{\varepsilon_0}\eta(t)dt=1,$ то
$\delta>0$ можно выбрать так, что
$\int\limits_{\varepsilon}^{\varepsilon_0-\delta}\eta(t)dt>0$).
Поскольку
$\int\limits_{\varepsilon}^{\varepsilon_0-\delta}\eta_{\delta}(t)dt=1,$
то по определению кольцевого $Q$-отображения в точке $x_0$ мы
получим
 \begin{multline*}
M\left(f\left(\Gamma\left(S_{\varepsilon}\,,
S_{\varepsilon_0-\delta}, A(\varepsilon, \varepsilon_0-\delta,
x_0)\right)\right)\right)\,\leqslant\\
\leqslant\frac{1}{\left(\int\limits_{\varepsilon}^{\varepsilon_0-\delta}\eta(t)dt\right)^n}
\int\limits_{\varepsilon<|x-x_0|<\varepsilon_0}Q(x)\cdot\eta^n(d(x,
x_0))\, \ dv(x).
 \end{multline*}
Переходя здесь к пределу при $\delta\rightarrow 0$ и учитывая
соотношение \eqref{eq5aa}, получаем что
\begin{equation}\label{eq1F}
{\rm cap}\,\,f(E) \leqslant
\int\limits_{\varepsilon<|x-x_0|<\varepsilon_0}Q(x)\cdot\eta^n(d(x,
x_0))\, \ dv(x)\end{equation}
для произвольной неотрицательной измеримой функции $\eta(t)$ такой
что $\int\limits_{\varepsilon}^{\varepsilon_0}\eta(t)dt=1.$
Рассмотрим семейство измеримых функций
$\eta_{\varepsilon}(t)=\psi_{\varepsilon}(t)/I(\varepsilon,
\varepsilon_0 ),$ $t\in(\varepsilon,\, \varepsilon_0).$ Заметим, что
для $\varepsilon\in (0, \varepsilon_0^{\,\prime})$
выполнено равенство
$\int\limits_{\varepsilon}^{\varepsilon_0}\eta_{\varepsilon}(t)\,dt=1.$
Тогда из (\ref{eq1F}) мы получим, что для любого $\varepsilon\in (0,
\varepsilon_0^{\,\prime})$
 \begin{equation}\label{eq8A}
{\rm cap}\,\,f(E)\leqslant\frac{1}{I^n(\varepsilon, \varepsilon_0)}
\int\limits_{\varepsilon<|x-x_0|<\varepsilon_0}Q(x)\cdot\psi_{\varepsilon}^n(d(x,
x_0))\,dv(x).
 \end{equation}
Из соотношений~\eqref{eq3.7B} и \eqref{eq8A} следует
соотношение~\eqref{eq3B}.~$\Box$
 \end{proof}

\medskip
Справедливо следующее утверждение (см.~\cite[предложение~4.7]{AS}).

\medskip
\begin{proposition}\label{pr2}
{\sl Пусть $X$ --- $\widetilde{Q}$-регулярное по Альфорсу
метрическое пространство с мерой{\em,} в котором выполняется
$(1;n)$-неравенство Пуанкаре так{\em,} что $\widetilde{Q}-1\leqslant
n\leqslant \widetilde{Q}.$ Тогда для произвольных континуумов $E$ и
$F,$ содержащихся в шаре $B(x_0, R),$ и некоторой постоянной $C>0$
выполняется неравенство
$$M(\Gamma(E, F, X))\geqslant \frac{1}{C}\cdot\frac{\min\{{\rm diam}\,E, {\rm diam}\,F\}}{R^{1+n-\widetilde{Q}}}.$$ }
\end{proposition}

Теперь сформулируем и докажем утверждение о равностепенной
непрерывности кольцевых $Q$-отображений между римановыми
многообразиями в <<максимальной>> степени общности.

\medskip
\begin{lemma}\label{lem1}{\sl\,
Пусть ${\Bbb M}^n$ и ${\Bbb M}_*^n$ --- римановы многообразия{\em,}
$n\geqslant 2,$ многообразие ${\Bbb M}_*^n$ связно{\em,} является
$n$-регулярным по Альфорсу{\em,} кроме того{\em,} в ${\Bbb M}_*^n$
выполнено $(1;n)$-неравенство Пуанкаре. Предположим{\em,} $D$ ---
область в ${\Bbb M}^n$ и $\frak{R}_{x_0, Q, B_R, K}(D)$ ---
семейство открытых дискретных кольцевых $Q$-отоб\-ра\-же\-ний
$f\colon D\rightarrow B_R\setminus K$ в точке $x_0\in D,$ где
$B_R\subset {\Bbb M}_*^n$
--- некоторый фиксированный шар радиуса $R,$ $K$ -- невырожденный континуум в $B_R.$
Пусть $\varepsilon_0<r_0,$ где шар $B(x_0, r_0)$ лежит
вместе со своим замыканием в некоторой нормальной окрестности $U$
точки $x_0.$

Предположим также{\em,} что для некоторого числа
$\varepsilon_0^{\,\prime}\in (0, \varepsilon_0)$ и семейства
неотрицательных измеримых по Лебегу функций
$\{\psi_{\varepsilon}(t)\},$ $\psi_{\varepsilon}\colon (\varepsilon,
\varepsilon_0)\rightarrow [0, \infty],$ $\varepsilon\in\left(0,
\varepsilon_0^{\,\prime}\right),$ выполнено
условие~\eqref{eq3.7B}{\em,} где некоторая заданная функция
$F(\varepsilon, \varepsilon_0)$ удовлетворяет условию
$F(\varepsilon, \varepsilon_0)=o(I^n(\varepsilon, \varepsilon_0)),$
а $I(\varepsilon, \varepsilon_0)$ определяется
соотношением~\eqref{eq3AB}.

Тогда семейство отображений $\frak{R}_{Q, x_0, B_R, K}(D)$ является
равностепенно непрерывным в точке $x_0.$ }
 \end{lemma}

\medskip
\begin{proof}
Пусть $f\in\frak{R}_{x_0, Q, B_R, K}(D)$. Полагаем $A:=B(x_0,
r_0)\subset D.$ Заметим, что при указанных условиях $\overline{A}$
является компактным подмножеством $D.$ Тогда при каждом
$0<\varepsilon<r_0$ множество $C:=\overline{B(x_0, \varepsilon)}$
является компактным подмножеством $B(x_0, r_0),$ поскольку $C$ есть
замкнутое подмножество компактного пространства $\overline{A}$
(см.~\cite[теорема~2 пункта II гл. 4]{Ku}). Таким образом, $E=(A,
C)$ --- конденсатор в ${\Bbb M}^n.$

Рассмотрим семейство кривых $\Gamma_{f(E)}$ для конденсатора $f(E)$
в терминах предложения $\ref{pr1*!}.$ Заметим, что подсемейство
неспрямляемых кривых семейства $\Gamma_{f(E)}$ имеет нулевой модуль,
и что оставшееся подсемейство, состоящее из всех спрямляемых кривых
семейства $\Gamma_{f(E)},$ состоит из кривых $\beta\colon [a,
b)\rightarrow f(D),$ имеющих предел при $t\rightarrow b.$ Заметим,
что указанный предел принадлежит множеству $\partial f(A).$ Из
сказанного следует, что
\begin{equation}\label{eq1}
M(\Gamma_{f(E)})=M(\Gamma(f(C), \partial f(A), f(A))).
\end{equation}
Заметим, что $\Gamma(K, f(C), {\Bbb M}_*^n)>\Gamma(f(C),
\partial f(A), f(A))$ (см. \cite[теорема
1, $\S\,46,$ п.~I]{Ku}), так что ввиду (\ref{eq32*A})
\begin{equation}\label{eq9}
M(\Gamma(f(C), \partial f(A), f(A)))\geqslant M(\Gamma(K, f(C),
{\Bbb M}_*^n))\,.
\end{equation}
Ввиду предложения \ref{pr2} получим:
\begin{equation}\label{eq2}
M(\Gamma(K, f(C), {\Bbb M}_*^n))\geqslant
\frac{1}{C_1}\cdot\frac{\min\{{\rm diam}\,f(C), {\rm
diam}\,K\}}{R}\,.
 \end{equation}
По лемме~\ref{lem4} $M(\Gamma_{f(E)})\rightarrow 0$ при
$\varepsilon\rightarrow 0,$ так что ввиду~\eqref{eq1} и~\eqref{eq2}
получаем, что
$$\min\{{\rm diam}\,f(C), {\rm diam}\,K\}={\rm diam}\,f(C)$$ при
$\varepsilon\rightarrow 0.$ Из соотношений~\eqref{eq3B}
и~\eqref{eq2} также вытекает, что для любого $\sigma>0$ найдётся
$\varepsilon_0=\varepsilon_0(\sigma)$ так, что при $\varepsilon\in
(0, \varepsilon_0)$
$${\rm diam}\,f(C)\leqslant \sigma,$$
что и означает равностепенную непрерывность семейства
$\frak{R}_{x_0, Q, B_R, K}(D)$ в точке $x_0.$~$\Box$
\end{proof}

\medskip{} Следующее утверждение вытекает из~\cite[лемма~4.1]{RSa} и \cite[следствие~5.1]{ARS}.

\medskip
\begin{proposition}\label{pr3}
{\sl В произвольном римановом многообразии ${\Bbb M}^n$ функция
$Q(x)$ класса $FMO$ в точке $x_0$ удовлетворяет
соотношению~\eqref{eq3.7B}{\em,} где функция
$G(\varepsilon):=F(\varepsilon, \varepsilon_0)/I^n(\varepsilon,
\varepsilon_0)$ удовлетворяет условию\/{\em:}
$G(\varepsilon)\rightarrow 0$ при $\varepsilon\rightarrow 0$ и
$\psi_{\varepsilon}(t)\equiv\psi(t):=\frac{1}{t\log\frac{1}{t}}.$}
\end{proposition}

\medskip
{\bf 3. Доказательство основных результатов.} {\it Доказательство
теоремы~{\em\ref{theor4*!}} вытекает из леммы~{\em\ref{lem1}} на
основании предложения~{\em\ref{pr3}}}.~$\Box$

\medskip{}
{\it Элементом площади} гладкой поверхности $H$ на
римановом многообразии ${\Bbb M}^n$ будем называть выражение вида
$$d\mathcal{A}=\sqrt{{\rm det}\,g_{\alpha\beta}^*}\,du^1\ldots du^{n-1},$$
где $g_{\alpha\beta}^*$ --- риманова метрика на $H,$ порождённая
исходной римановой метрикой $g_{ij}$ согласно соотношению
\begin{equation}\label{eq5}
g_{\alpha\beta}^*(u)=g_{ij}(x(u))\frac{\partial x^i}{\partial
u^{\alpha}} \frac{\partial x^j}{\partial u^{\beta}}.
\end{equation}
Здесь индексы $\alpha$ и $\beta$ меняются от $1$ до $n-1,$ а $x(u)$
обозначает параметризацию поверхности $H$ такую, что $\nabla_u x\ne
0.$ Справедливо следующее утверждение, обобщающее теорему
\ref{theor4*!}.

\medskip
\begin{theorem}\label{th1} {\sl\,
Пусть ${\Bbb M}^n$ и ${\Bbb M}_*^n$ --- римановы многообразия{\em,}
$n\geqslant 2,$ многообразие ${\Bbb M}_*^n$ связно{\em,} является
$n$-регулярным по Альфорсу{\em,} кроме того{\em,} в ${\Bbb M}_*^n$
выполнено $(1;n)$-неравенство Пуанкаре. Пусть $B_R\subset {\Bbb
M}_*^n$ --- некоторый фиксированный шар радиуса $R,$
$\overline{B_R}\ne{\Bbb M}_*^n,$ $D$ --- область в ${\Bbb M}^n$ и
$Q\colon D\rightarrow [1, \infty]$ --- функция{\em,} измеримая по
Лебегу. Обозначим через $\frak{R}_{x_0, Q, B_R, K}(D)$ семейство
открытых дискретных кольцевых $Q$-отображений $f\colon D\rightarrow
B_R\setminus K$ в точке $x_0\in D.$ Тогда семейство отображений
$\frak{R}_{x_0, Q, B_R, K}(D)$ является равностепенно непрерывным в
точке $x_0\in D,$ если при некотором $\delta(x_0)>0$ выполняется
равенство
\begin{equation}\label{eq3}
\int\limits_{0}^{\delta(x_0)}\frac{dt}{\left(\int\limits_{S(x_0,
t)}Q(x)\,d\mathcal{A}\right)^{\frac{1}{n-1}}}=\infty.
\end{equation}
 }
\end{theorem}

\begin{proof} Достаточно показать, что условие~\eqref{eq3} влечёт
выполнение условия~\eqref{eq3.7B} леммы~\ref{lem1}. Можно считать,
что $B(x_0, \delta(x_0))$ лежит в нормальной окрестности точки
$x_0.$ Рассмотрим функцию
$$
\psi(t)\quad =\quad\left\{
\begin{array}{rr}
\left(\int\limits_{S(x_0,
t)}Q(x)\,d\mathcal{A}\right)^{\frac{1}{1-n}}, & t\in (0, \delta(x_0)),\\
0, & t\not\in (0, \delta(x_0)).
\end{array}
\right.$$
Заметим теперь, что требование вида~\eqref{eq3AB} выполняется при
$\varepsilon_0=\delta(x_0)$ и всех достаточно малых $\varepsilon.$
Далее установим неравенство
\begin{equation}\label{eq4}
\int\limits_{\varepsilon<d(x, x_0)<\delta(x_0)} Q(x)\psi^n(d(x,
x_0))\,dv(x)\leqslant C\cdot\int\limits_{\varepsilon}^{\delta(x_0)}
\left(\int\limits_{S(x_0,
t)}Q(x)\,d\mathcal{A}\right)^{\frac{1}{1-n}}dt
\end{equation}
при некоторой постоянной $C>0.$ Для этого покажем, что к левой части
соотношения \eqref{eq4} применим аналог теоремы Фубини. Рассмотрим в
окрестности точки $x_0\in S(z_0, r)\subset {\Bbb R}^n$ локальную
систему координат $z^1,\ldots, z^n,$ $n-1$ базисных векторов которой
взаимно ортогональны и лежат в плоскости, касательной к сфере в
точке $x_0,$ а последний базисный вектор перпендикулярен этой
плоскости. Пусть $r, \theta^1,\ldots, \theta^{n-1}$ сферические
координаты точки $x=x(\theta)$ в ${\Bbb R}^n.$ Заметим, что $n-1$
приращений переменных $z^1,\ldots, z^{n-1}$ вдоль сферы при
фиксированном $r$ равны $dz^1=rd\theta^1,\dots,
dz^{n-1}=rd\theta^{n-1},$ а приращение переменной $z^n$ по $r$ равно
$dz^n=dr.$ В таком случае,
$$dv(x)=\sqrt{{\rm det\,}g_{ij}(x)}r^{n-1}\,dr d\theta^1\dots
d\theta^{n-1}.$$
Рассмотрим параметризацию сферы $S(0, r)$ $x=x(\theta),$
$\theta=(\theta^1,\ldots,\theta^{n-1}),$ $\theta_i\in (-\pi, \pi].$
Заметим, что $\frac{\partial x^{\alpha}}{\partial \theta^{\beta}}=1$
при $\alpha=\beta$ и $\frac{\partial x^{\alpha}}{\partial
\theta^{\beta}}=0$ при $\alpha\ne \beta,$ $\alpha,\beta=1,\ldots,
n-1.$ Тогда в обозначениях соотношения~\eqref{eq5} имеем:
$g_{\alpha\beta}^*(\theta)=g_{\alpha\beta}(x(\theta))r^2,$
$$d\mathcal{A}=\sqrt{\det\, g_{\alpha\beta}(x(\theta))}r^{n-1}d{\theta}^1\ldots d{\theta}^{n-1}.$$
Заметим, что
 \begin{equation}\label{eq6}
\int\limits_{S(x_0,
r)}Q(x)\psi^n(d(x,x_0))\,d\mathcal{A}=\psi^n(r)r^{n-1}\cdot\int\limits_{\Pi}\sqrt{{\rm
det\,}g_{\alpha\beta}(x(\theta))}Q(x(\theta))\,d\theta^{1}\dots
d\theta^{n-1},
 \end{equation}
где $\Pi=(-\pi, \pi]^{n-1}$ --- прямоугольная область изменения
параметров $\theta^1,\ldots,\theta^{n-1}.$ Напомним, что в
нормальной системе координат геодезические сферы переходят в обычные
сферы того же радиуса с центром в нуле, а пучок геодезических,
исходящих из точки многообразия, переходит в пучок радиальных
отрезков в ${\Bbb R}^n$ (см.~\cite[леммы~5.9 и 6.11]{Lee}), так что
кольцу $\{x\in {\Bbb M}^n: \varepsilon<d(x, x_0)<\delta(x_0)\}$
соответствует та часть ${\Bbb R}^n,$ в которой $r\in (\varepsilon,
\delta(x_0)).$ Согласно сказанному выше, применяя классическую
теорему Фубини (см., напр.,~\cite[разд.~8.1, гл.~III]{Sa}),
 \begin{multline}\label{eq7}
\int\limits_{\varepsilon<d(x, x_0)<\delta(x_0)} Q(x)\psi^n(d(x,
x_0))\,dv(x)=\\
=\int\limits_{\varepsilon}^{\delta(x_0)}\int\limits_{\Pi} \sqrt{{\rm
det\,}g_{ij}(x)}Q(x)\psi^n(r)r^{n-1}\,d\theta^1\dots
d\theta^{n-1}dr.
 \end{multline}
Поскольку в нормальных координатах тензорная матрица $g_{ij}$  сколь
угодно близка к единичной в окрестности данной точки, то
$C_2\det\,g_{\alpha\beta}(x)\leqslant\det\,g_{ij}(x)\leqslant
C_1\det\,g_{\alpha\beta}(x).$ Учитывая сказанное и сравнивая
\eqref{eq6} и \eqref{eq7}, приходим к соотношению~\eqref{eq4}. Но
тогда также
$$\int\limits_{\varepsilon<d(x, x_0)<\delta(x_0)}
Q(x)\psi^n(d(x, x_0))dv(x)=o(I^n(\varepsilon, \delta(x_0)))$$
ввиду соотношения~\eqref{eq3}. Утверждение теоремы следует теперь из
леммы~\ref{lem1}.~$\Box$
 \end{proof}

\medskip
Ввиду теоремы Арцела--Асколи (см., напр.,~\cite[пункт~20.4]{Va})
имеем также следующее

\medskip
\begin{corollary}\label{cor2} {\sl\, Предположим{\em,} что в условиях
теоремы~{\em\ref{theor4*!}} многообразие ${\Bbb M}_*^n$ является
компактным. Если $\frak{R}_{D, Q, B_R, K}(D)$ --- семейство{\em,}
состоящее из всех открытых дискретных отображений $f\colon
D\rightarrow B_R\setminus K,$ являющихся кольцевыми
$Q$-отоб\-ра\-же\-ни\-я\-ми в каждой точке $x_0\in D$ и{\em,} кроме
того{\em,} условие~\eqref{eq3} выполнено в каждой точке $x_0\in D,$
то класс $\frak{R}_{D, Q, B_R, K}(D)$  образует нормальное семейство
отображений. }
 \end{corollary}

\medskip
\noindent{{\bf Денис Петрович Ильютко} \\
МГУ имени М.\,В.\,Ломоносова \\
кафедра дифференциальной геометрии и приложений, мехмат факультет,\\
Ленинские горы, ГЗ МГУ, ГСП-1, г.~Москва, Россия, 119991\\
тел. +7 495 939 39 40, e-mail: ilyutko@yandex.ru}

\medskip
\noindent{{\bf Евгений Александрович Севостьянов} \\
Житомирский государственный университет им.\ И.~Франко\\
кафедра математического анализа, ул. Большая Бердичевская, 40 \\
г.~Житомир, Украина, 10 008 \\ тел. +38 066 959 50 34 (моб.),
e-mail: esevostyanov2009@mail.ru}

\end{document}